\documentclass[a4paper]{amsart}
\usepackage{amssymb,amsfonts,amsthm,amsmath}
\usepackage{euscript}

\usepackage{amsmath}
\usepackage{amsbsy}
\usepackage{amssymb}
\usepackage{amscd}
\usepackage{a4wide}
\usepackage{epic}

\newtheorem{theorem}{Theorem}
\newtheorem{proposition}{Proposition}[section]

\theoremstyle{definition}

\theoremstyle{remark}
\newtheorem*{remark}{Remark}

\DeclareMathOperator{\Imp}{\mathrm{Im}\,}

\DeclareMathOperator{\End}{\mathrm{End}}
\DeclareMathOperator{\Hom}{\mathrm{Hom}}
\DeclareMathOperator{\Idem}{\mathrm{Idem}}

\DeclareMathOperator{\deff}{\mathrm{def}}

\def\Id{\mathrm{Id}\,}

\title{$n$-Subspaces in linear and unitary spaces}
\author{Yu. S. Samoilenko} 

\address{Institute of
  Mathematics, National Academy of Sciences of Ukraine, 3
  Tereshchenkivs'ka, Kyiv, 01601, Ukraine}
\email{yurii\_sam@imath.kiev.ua}

\author{D. Y. Yakymenko}
\address{Institute of Mathematics, National Academy of
  Sciences of Ukraine, 3 Tereshchenkivs'ka, Kyiv, 01601,
  Ukraine}
\email{dandan.ua@gmail.com}

\date{}

\begin{document}

\newcommand{\iiSC}[1]{\stackrel{\circ}{#1}}
\newcommand{\iiSB}[1]{\stackrel{\bullet}{#1}}

\maketitle

\begin{abstract}
  We study a relation between brick $n$-tuples of subspaces of a
  finite dimensional linear space, and irreducible $n$-tuples of
  subspaces of a finite dimensional Hilbert (unitary) space such
  that a linear combination, with positive coefficients, of
  orthogonal projections onto these subspaces equals the identity
  operator. We prove that brick systems of one-dimensional subspaces
  and the systems obtained from them by applying the Coxeter
  functors (in particular, all brick triples and quadruples of
  subspaces) can be unitarized. For each brick triple and
  quadruple of subspaces, we describe sets of characters that admit
  a unitarization.
\end{abstract}

\section{Introduction}

A relationship between representations of groups on linear spaces
and their unitary representations on Hilbert spaces is useful for
the two kinds of representations.

In this paper, we study a relation between brick $n$-tuples
$L=(V;V_1,\dots ,V_n)$ of subspaces $V_k$ of a complex finite
dimensional linear space $V$, see Section~2, and irreducible
orthoscalar $n$-tuples $S=(H;H_1, \dots H_n)$ of subspaces $H_k$ of
a finite dimensional Hilbert (unitary) space $H$, that is, such that
there exists a collection of positive numbers $(a_0;a_1,\dots
,a_n)$, called a character, such that
\begin{equation}
  \label{eq:1}
  \sum a_k P_{H_k} = a_0 I, 
\end{equation}
where $P_{H_k}$ are orthogonal projections onto the subspaces $H_k$
and $I$ is the identity operator on $H$, see Section~4. Recall that
an $n$-tuple $L=(V;V_1,\dots ,V_n)$ of subspaces $V_k$ is called brick
if any linear operator $X:V\rightarrow V$ such that $X(V_k) \subset
V_k$ is a multiple of the identity operator. An $n$-tuple of
orthogonal projections $\{P_{H_k}\}_{k=1}^n$ is called irreducible
if, for any linear operator $X:H\rightarrow H$,
$[X,P_{H_k}]=0,~k=1,\dots ,n$, implies that $X=\lambda I$, $\lambda \in
\mathbb{C}$.

If an $n$-tuple of orthogonal projections $P_{H_k}$ on $H$ is
irreducible and satisfies relation~(\ref{eq:1}), then the
corresponding collection of the subspaces $H_k$ of the linear space
$H$ will be brick, see~\cite{KNR}. In this paper, we call a brick
collection $L=(V;V_1,\dots ,V_n)$ unitarizable if there exists a
scalar product on $V$ and a character $\chi=(a_0; a_1,\dots ,a_n)$ such
that the corresponding collection of orthogonal projections onto
$H_k=V_k$ satisfies~(\ref{eq:1}). In Section~4 we prove
(Theorems~\ref{Th:1.1} and~\ref{Th:1.2}) that brick systems of
one-dimensional subspaces and the ones they yield by applying the
Coxeter functors (in particular, all brick triples and quadruples of
spaces) can be unitarized, see~\cite{MS1}, \cite{MS2} for
unitarizing all nondegenerate brick quadruples with the
characters $(\gamma; 1,1,1,1), ~\gamma>0$.  There are also other
examples of systems of subspaces that can be unitarized. In
Section~5, for all brick quadruples of subspaces we describe sets of
characters that allow for a unitarization, see Theorems~\ref{Th:2.1}
and~\ref{Th:2.2}.

The interests of the authors to the topics discussed in the paper
was increased in connection with the article~\cite{EW}, where it was
remarked that ``There seems to be interesting relations with the
n-tuples of subspaces and the sums of projections''.

\section{On $n$-tuples of subspaces of a linear space}

\subsection{}
In this subsection, we recall known facts about $n$-tuples of
subspaces of a linear space, needed in the sequel.

Let $L = (V; V_1,V_2, \dots ,V_n)$ be a system of subspaces of $V$,
$\tilde{L} = (\tilde{V}; \tilde{V_1},\tilde{V_2}, \dots
,\tilde{V_n})$ a system of subspaces of $\tilde{V}$. A linear
operator $R:V\rightarrow\tilde{V}$ is called a homomorphism of the
system $L$ into $\tilde{L}$ if $R(V_i) \subset \tilde{V_i}$,
$\forall i=\overline{1,n}$. $R:V\rightarrow\tilde{V}$ is called an
isomorphism if there exists an inverse $R^{-1}$ such that
$R^{-1}(\tilde{V_i}) \subset V_i$, $\forall i=\overline{1,n}$, and
the systems $L$ and $\tilde{L}$ will be called isomorphic
(equivalent).

Denote by $\Hom(L, \tilde{L})$ the set of homomorphisms from $L$
into $\tilde{L}$. $\End(L):=\Hom(L,L)$, that is, $\End(L) =
\{R:V\rightarrow V\mid R(V_i) \subset V_i, \forall i=\overline{1,n}
\}$. A system $S$ is called brick (Schur, transitive) if
$\End(L)=\mathbb C I$.

Denote by $\Idem(L) = \{R:V\rightarrow V\mid R(V_i) \subset V_i,
\forall i=\overline{1,n}, R^{2} = R \}$. A system $L$ is called
indecomposable if $\Idem(L)=\{0, I\}$. The property of being
indecomposable is equivalent to that the system is not isomorphic to
a direct sum of two nonzero systems.

It directly follows from the definitions that a brick system is also
indecomposable. However, if $n \geqslant 4$ there are examples
showing that the converse is not true.

An isomorphism preserves the property of a system to be
indecomposable or brick.

\subsection{}
There are only four indecomposable nonequivalent pairs of subspaces,
--- $(\mathbb C; 0, 0)$, $(\mathbb C; \mathbb C, 0)$, $(\mathbb C;
0, \mathbb C)$, $(\mathbb C; \mathbb C, \mathbb C)$. All of them are
brick.

\sloppy
The number of nonequivalent indecomposable triples of subspaces is
$9$. There are eight triples of subspaces of a one-dimensional
space, --- $(\mathbb C; 0, 0, 0)$, $(\mathbb C; \mathbb C, 0, 0)$,
$(\mathbb C; 0, \mathbb C, 0)$, $(\mathbb C; \mathbb C, \mathbb C,
0)$, $(\mathbb C; 0, 0, \mathbb C)$, $(\mathbb C; \mathbb C, 0,
\mathbb C)$, $(\mathbb C; 0, \mathbb C, \mathbb C)$, $(\mathbb C;
\mathbb C, \mathbb C, \mathbb C)$, and one triple in a
two-dimensional space. This is $(\mathbb C^2; \mathbb C(1,0), \mathbb
C(0,1), \mathbb C (1,1))$. All of them are brick.

\fussy
For $n=4$ already, not every indecomposable $n$-tuple will be brick.
A description of brick quadruples and indecomposable quadruples is
given in~\cite{Br,Naz,GP} and others. For our
purposes, a complete description is not needed, but we will only use
some properties.

Let $d=(d_0;d_1,d_2,d_3,d_4)$ be a generalized dimension of the
system $L=(V;V_1, V_2, V_3, V_4)$. A Tits form is the quadratic form
$$
T(d)=\sum_{i=0}^4d_i^2-d_0\sum_{i=1}^4d_i.
$$

For an indecomposable system $L$, the Tits form of the dimension $d$
equals either $1$ (the dimension $d$ in such a case is called a real
root) or $0$ (and $d$ is called an imaginary root). If $d$ is a real
root, then for this dimension there exists exactly one
indecomposable quadruple of subspaces, which is a brick quadruple.
If $d$ is an imaginary root, then for this dimension there exists a
family of quadruples. Imaginary roots will be multiples of the
imaginary root $\sigma = (2;1,1,1,1)$. In such a case, brick systems
will be obtained only for the minimal root $\sigma$.

To classify quadruples of spaces, it is convenient to use the notion
of a deficiency defined by 
$$
\deff(L)=2d_0-\sum_{i=1}^4d_i.
$$

One way to construct indecomposable quadruples is to use Coxeter
functors $\Phi ^+$ and $\Phi ^-$~\cite{GP}. These functors allow to
use a system to obtain other systems preserving the
indecomposability and brick properties. They also preserve the
deficiency and the type of the root.

The following is a list of all dimensions corresponding to real
roots:
\begin{itemize}
\item[] $D_4(2m+1, -1) = (2m+1; m,m,m,m+1), ~ \deff=-1$,
\item[] $D_4(2m+1, 1) = (2m+1; m+1,m+1,m+1,m), ~ \deff=1$,
\item[] $D_4(2m, -1) = (2m; m,m,m,m-1), ~ \deff=-1$,
\item[] $D_4(2m, 1) = (2m; m,m,m,m+1), ~ \deff=1$,
\end{itemize}
and permutation of the subspaces $D_i( \cdot, \cdot), i=1,2,3$;
\begin{itemize}
\item[] $D_0(2m+1, -2) = (2m+1; m,m,m,m), ~ \deff=-2$,
\item[] $D_0(2m, 2) = (2m+1; m+1,m+1,m+1,m+1), ~ \deff=2$,
\item[] $D_{3,4}(2m+1, 0) = (2m+1; m,m,m+1,m+1), ~ \deff=0$,
\end{itemize}
and permutations of the subspaces $D_{i,j}(2m+1, 0)$.

In the case where $\deff\neq0$, all indecomposable systems with
these dimensions will be brick and can be obtained by applying the
Coxeter functors to the simplest collections of subspaces; these are
collections of subspaces of a space of dimension $1$.  For the
dimension $D_{i,j}(2m+1, 0)$, a system will be brick only if $m=0$.

Brick nonequivalent quadruples in a space of dimension $\sigma =
(2;1,1,1,1)$ can be written as follows:
$$
\begin{array}{rcl}
  S_{\mu} &=& (\mathbb C^2=<e_1, e_2>; <e_1>, <e_2>, <e_1+\mu
  e_2>, <e_1+e_2>),\qquad \mu \in \mathbb C \setminus \{0,1\},\\
  S_{3,4} &=& (\mathbb C^2=<e_1, e_2>; <e_1>, <e_2>, <e_1+e_2>,
  <e_1+e_2>)
\end{array}
$$
and permutations of $S_{i,j}$.

\subsection{}
For $n\geqslant5$ a description of indecomposable $n$-tuples of
subspaces is a very difficult problem and contains, as a
subproblem, the problem of a description, up to unitary
equivalence, of indecomposable pairs of operators on a finite
dimensional linear space.

Indeed, let $E$ be a linear space, $A$, $B$ linear operators on $E$.
Consider quinaries of subspaces, $L_{(A,B)}=(E\oplus E;
(x,0),(0,x),(x,x),(x,Ax),(x,Bx)$, $x\in E)$. Such a quinary  will
be called an operator quinary. Note that each quinary
$(V;V_1,\dots ,V_5)$ such that $\dim V=2n$, $\dim V_i=n$,
$i=\overline{1,5}$, and $V_i\cap V_j=0$, $i\neq j$, is equivalent to
an operator quinary  with nondegenerate $A$ and $B$.

A description of indecomposable quinaries up to equivalence is
already a very difficult problem, --- it is the problem of a
description of indecomposable pairs of operators on a linear space
up to equivalence.

\begin{proposition}\label{P:1}
  \begin{enumerate}
  \item[1)] $L_{A,B} \simeq L_{\tilde{A},\tilde{B}}$
    $~\Longleftrightarrow~$ $(A,B) \thicksim (\tilde{A},\tilde{B})$,
    that is, there exists an invertible operator $T$ from
    $\tilde{E}$ into $E$ such that $\tilde{A}=T^{-1}AT,
    \tilde{B}=T^{-1}BT$.
  \item[2)] $L_{A,B}$ is indecomposable $~\Longleftrightarrow~$
    the pair $(A,B)$ is indecomposable, that is, for all idempotents
    $T=T^2$ on $E$ such that $TA=AT$, $TB=BT$, we have that $T=0$ or
    $T=I$.
  \end{enumerate}
\end{proposition}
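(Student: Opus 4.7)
The plan is to compute the form of an arbitrary homomorphism between two operator quinaries, and then read off both statements from this computation.

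First, I would consider a homomorphism $R\colon E\oplus E\to\tilde E\oplus\tilde E$ of quinaries and write it as a $2\times2$ block matrix $R=\begin{pmatrix}R_{11}&R_{12}\\ R_{21}&R_{22}\end{pmatrix}$. The conditions $R(V_1)\subset\tilde V_1$ and $R(V_2)\subset\tilde V_2$ force $R_{21}=0$ and $R_{12}=0$, so $R=\mathrm{diag}(T_1,T_2)$ for some $T_i\colon E\to\tilde E$. The condition $R(V_3)\subset\tilde V_3$ then gives $(T_1x,T_2x)\in\tilde V_3$ for all $x$, i.e.\ $T_1=T_2=:T$. Finally, $R(V_4)\subset\tilde V_4$ and $R(V_5)\subset\tilde V_5$ translate into the intertwining relations $T\!A=\tilde A\,T$ and $T\!B=\tilde B\,T$. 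Conversely, any $T$ satisfying these relations yields, via $R=\mathrm{diag}(T,T)$, a homomorphism of quinaries. This establishes a bijection
\[
\Hom(L_{A,B},L_{\tilde A,\tilde B})\;\cong\;\{T\colon E\to\tilde E\mid T\!A=\tilde A\,T,\;T\!B=\tilde B\,T\},\qquad R\leftrightarrow T.
\]

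For part 1), if $R$ is an isomorphism, then the corresponding $T$ is invertible (this follows by applying the above to $R^{-1}$, which has the same block-diagonal form), and conversely an invertible intertwiner $T$ gives an invertible $R=\mathrm{diag}(T,T)$ whose inverse $\mathrm{diag}(T^{-1},T^{-1})$ is again a homomorphism of quinaries in the opposite direction. Renaming $T^{-1}$ as the operator of the statement yields exactly $\tilde A=T^{-1}AT$, $\tilde B=T^{-1}BT$.

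For part 2), specialize to $E=\tilde E$, $(A,B)=(\tilde A,\tilde B)$. The correspondence above restricts to a bijection between endomorphisms of $L_{A,B}$ and operators $T\in\End(E)$ with $TA=AT$, $TB=BT$; moreover $R^2=R$ holds iff $T^2=T$, since $R=\mathrm{diag}(T,T)$. Therefore $\Idem(L_{A,B})=\{0,I\}$ if and only if the only idempotents on $E$ commuting with both $A$ and $B$ are $0$ and $I$, which is exactly the indecomposability of the pair $(A,B)$.

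The whole argument is a routine block-matrix calculation; the only thing to be careful about is the ordering of the five subspaces and the fact that three of them (namely $V_1,V_2,V_3$) are needed just to collapse the homomorphism to the scalar block-diagonal form $\mathrm{diag}(T,T)$, so that the remaining two subspaces $V_4,V_5$ encode precisely the two intertwining conditions with $A$ and $B$. There is no real obstacle; the main point to get right is this preliminary reduction, after which both items of the proposition are immediate.
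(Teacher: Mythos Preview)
Your argument is correct and is exactly the standard computation one would expect. Note, however, that the paper states Proposition~\ref{P:1} (and the companion Proposition~\ref{P:2}) without proof; there is no ``paper's own proof'' to compare against. Your block-matrix reduction---using $V_1,V_2,V_3$ to force $R=\Diag(T,T)$ and then reading off the intertwining relations from $V_4,V_5$---is the canonical justification, and it simultaneously yields the description of $\Hom(L_{A,B},L_{\tilde A,\tilde B})$ from which Proposition~\ref{P:2} follows as well.
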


For $n\geqslant5$, the problem of a description of brick $n$-tuples
of subspaces up to equivalence is also very difficult.

\begin{proposition}\label{P:2} 
  $L_{A,B}$ is brick if and only if $(A,B)$ is brick, that is, if
  $TA=AT, ~TB=BT$, then $T=\lambda I$.
\end{proposition}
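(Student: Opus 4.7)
The plan is to compute $\End(L_{A,B})$ explicitly by working block by block, then observe that the resulting algebra is exactly the joint commutant of $A$ and $B$.

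First I would write an arbitrary linear operator $R\colon E\oplus E\to E\oplus E$ as a $2\times 2$ block matrix
$$
R=\begin{pmatrix} R_{11} & R_{12}\\ R_{21} & R_{22}\end{pmatrix},
$$
with each $R_{ij}$ a linear operator on $E$. The condition $R(V_1)\subset V_1$ reads $R(x,0)=(R_{11}x,R_{21}x)\in V_1$ for every $x$, which forces $R_{21}=0$. Symmetrically, $R(V_2)\subset V_2$ gives $R_{12}=0$, so $R$ is block diagonal. Next, $R(V_3)\subset V_3$ applied to $(x,x)$ yields $(R_{11}x,R_{22}x)\in V_3$, i.e.\ $R_{11}=R_{22}$; call this common operator $T$. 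Thus every endomorphism of $L_{A,B}$ has the form $R=\mathrm{diag}(T,T)$.

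Now I would impose the two remaining invariance conditions. From $R(V_4)\subset V_4$, applied to $(x,Ax)$, we get $(Tx,TAx)\in V_4$, which means $TAx=A(Tx)$ for all $x\in E$, i.e.\ $TA=AT$. In the same way, $R(V_5)\subset V_5$ is equivalent to $TB=BT$. Conversely, any $T\in\End(E)$ satisfying these two commutation relations gives an endomorphism $\mathrm{diag}(T,T)$ of $L_{A,B}$. Hence the map $T\mapsto \mathrm{diag}(T,T)$ is an algebra isomorphism
$$
\{T\in\End(E): TA=AT,\ TB=BT\}\;\cong\;\End(L_{A,B}),
$$
and scalar $T=\lambda I_E$ correspond precisely to scalar endomorphisms $\lambda I_{E\oplus E}$.

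Consequently, $\End(L_{A,B})=\mathbb C\,I_{E\oplus E}$ if and only if the joint commutant of $A$ and $B$ reduces to the scalars, which is exactly the statement that $(A,B)$ is a brick pair. There is no real obstacle here; the argument is a direct unwinding of the subspace-invariance conditions, and the only thing to watch is that the diagonal subspace $V_3$ is what couples the two diagonal blocks and forces them to coincide—without $V_3$ one would only obtain a pair of independent intertwiners.
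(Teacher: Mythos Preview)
Your argument is correct: the block computation showing that $\End(L_{A,B})\cong\{T\in\End(E):TA=AT,\ TB=BT\}$ via $T\mapsto\Diag(T,T)$ is exactly the right thing to do, and each step is verified properly. The paper itself states Proposition~\ref{P:2} without proof (it is treated as a routine observation alongside Proposition~\ref{P:1}), so there is no alternative argument to compare against; your explicit unwinding of the invariance conditions is the natural and essentially unique way to justify the claim.
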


This problem, for example, contains the problem of describing
irreducible pairs of unitary operators on a finite dimensional
Hilbert space up to unitary equivalence~(\cite{MS2}).

\subsection{} 
A possible additional condition for the problem of describing
indecomposable $n$-tuples of subspaces to become meaningful is the
condition that the subspaces of the collection make a representation
of a finite partially ordered set, see papers on representations
of partially ordered sets in the category of linear
spaces~\cite{NR,Kl} and others.

Another additional condition for the problem of a description of
irreducible $n$-tuples of subspaces to become solvable is a
condition on possible indecomposable terms in the decomposition of the
quadruples $(V;V_{i_1},V_{i_2},V_{i_3},V_{i_4})$,
$i_1,i_2,i_3,i_4\in\{1,2,\dots, n\}$, $i_k\neq i_j$ $(k\neq j)$.

\section{On $n$-tuples of subspaces of a Hilbert 
  space}\label{Sec:3}

\subsection{}
There are many works dealing with $n$-tuples
$S=(H;H_1,H_2,\ldots,H_n)$ of subspaces of a Hilbert space $H$,
see~\cite{Ha1,Sund,EW} and others. In the sequel, $H$ is usually
assumed to be a finite dimensional Hilbert space, i.e., a unitary
space. Using the Hilbert space property of $H$ we can assign, to
every subspace $H_i$, a unique orthogonal projection
$P_i:H\rightarrow H$ onto this subspace. Collections of subspaces
$S=(H;H_1,H_2,\ldots,H_n)$ and
$\tilde{S}=(\tilde{H};\tilde{H}_1,\tilde{H}_2,\ldots,\tilde{H}_n)$
are called unitary equivalent if there exists a unitary operator
$U:H\rightarrow\tilde{H}$ such that $UP_i = \tilde{P_i}U ~ \forall
i=\overline{1,n}$.

A collection of orthogonal projections $\{P_i\}_{i=1}^{n}$ on $H$ is
called irreducible if for any $X\in L(H)$ satisfying $XP_i=P_iX$ for all
$i=\overline{1,n}$ it follows that $X=\lambda I ~ (\lambda \in
\mathbb C)$.

A collection of subspaces $S=(H;H_1,H_2,\ldots,H_n)$ of a Hilbert
space can always be connected with the collection of subspaces
$L=(V=H;V_1=H_1,V_2=H_2,\ldots,V_n=H_n)$ in the linear space $V=H$,
forgetting the scalar product structure. Here unitary equivalent
collections will correspond to isomorphic systems in a linear space
(an isomorphism of systems of subspaces of a Hilbert space is
understood as an isomorphism of the corresponding systems in linear
spaces). The converse, of course, is not true.

For a unitary space, one can also talk about brick
collections and indecomposable collections of subspaces meaning
brick and indecomposable collections of subspaces of $H$ considered
as a linear space. It is easy to see here that indecomposability of
a collection of subspaces implies that the corresponding collection
of the orthogonal projections is irreducible. If a collection of
orthogonal projections is irreducible, then the collection of
subspaces need not be, in general, indecomposable. For example, a
pair of projections onto two nonorthogonal one-dimensional
subspaces in $\mathbb C^2$ will be irreducible but the pair of the
corresponding subspaces is decomposable.

\subsection{}
Irreducible pairs of subspaces exist only in one- and
two-dimensional unitary spaces. A list of the corresponding unitary
nonequivalent pairs of orthogonal projections $\{P_1, P_2\}$ is the
following.
\begin{enumerate}
\item[a)] $\dim H=1$: $\{P_1=0, P_2=0\}$, $\{P_1=1, P_2=0\}$,
  $\{P_1=0, P_2=1\}$, $\{P_1=1, P_2=1\}$;
\item[b)] $\dim H=2$:
  \[
  P_1 = \begin{pmatrix} 1 & 0 \\ 0 & 0 \end{pmatrix}, \qquad
  P_2 =
  \begin{pmatrix} \cos^2\phi & \cos\phi\sin\phi \\ 
    \cos\phi\sin\phi & \sin^2\phi 
  \end{pmatrix}, \quad
  \phi \in (0, \pi/2).
  \]
\end{enumerate}

A description of triples of subspaces of $H$ up to unitary
equivalence is a $*$-wild problem. Even assuming that two of these
spaces are orthogonal, the problem is still
$*$-wild~\cite{KrSam1,KrSam2}.

\subsection{}
For a system of subspaces of a Hilbert space, one can also define
Coxeter functors, $\iiSC{F}$, $\iiSB{F}$, $F^+ =~\iiSC{F}\iiSB{F}$,
and $F^- =~\iiSB{F}\iiSC{F}$, see ~\cite{KRS,Kru02}, which
correspond to the Coxeter functors $\Phi ^+$ and $\Phi ^-$ for
systems in a linear space; if a system $S$ in a Hilbert space $H$
corresponds to a system $L$ in a linear space $V=H$, then $F^+ S$ is
isomorphic to the system $\Phi ^+L$, and $F ^- S$ is isomorphic to
the system $\Phi ^- L$.

The Coxeter functors have the following properties:
$\iiSC{F}\circ\iiSC{F} = \iiSB{F}\circ\iiSB{F} = F^+\circ F^- =
F^-\circ F^+ =\Id$; they preserve the property of the system to be
brick or indecomposable, as well as irreducibility. If
$d=(d_0;d_1,\dots ,d_n)$ is a  dimension of a system $S$, then the
dimension of $\iiSC{F}S$ is $\iiSC{c}d:=(\sum^n_{i=1}
d_i-d_0;d_1,\dots ,d_n)$, and the dimension of $\iiSB{F}S$ is
$\iiSB{c}d:=(d_0;d_0-d_1,\dots ,d_0-d_n)$. Similarly, $F^+S$ has the
dimension $c^+d:=(\iiSC{c}\iiSB{c})d~\{ =\iiSB{c}(\iiSC{c}(d))~\}$,
and the dimension of $F^-S$ is $c^-d:=(\iiSB{c}\iiSC{c})d$.

\subsection{}
One of a natural additional condition on the collection
$S=(H;H_1,H_2,\ldots,H_n)$ so that the problem of unitary
description of the $n$-tuple of subspaces of $H$ becomes solvable is
the orthoscalar condition, which is the linear relation $\sum
\alpha_k P_{H_k} = \alpha_0 I$ for a fixed character $\chi =
(\alpha_0; \alpha_1,\dots ,\alpha_n)$, $\alpha_k>0$~\cite{KRS, Os, Os
  Sam2, KNR}. A remarkable property of such collections of subspaces
is that the Coxeter functors preserve the orthoscalarity
property, although changing the character, in general. Namely, if $S$
is an orthoscalar collection with a character $\chi$, then
$\iiSC{F}S$ is orthoscalar with the character $\iiSB{c}(\chi)$, and
$\iiSB{F}S$ is orthoscalar with the character $\iiSC{c}(\chi)$.

For $n=2$ and $n=3$, orthoscalar collections have a finite Hilbert
type, that is, for any fixed character there exists only a finite
number of unitary nonequivalent irreducible collections of subspaces
satisfying the relation $\sum \alpha_k P_{H_k} = \alpha_0 I$.

For $n=4$ there are two possibilities that depend on the character.
One of them is that there exists a finite number of irreducible
unitary nonequivalent such quadruples (all of them are in a finite
dimensional $H$, although their dimensions could increase when
changing the characters), and they can be obtained from the simplest
ones by applying the Hilbert space version of the Coxeter functors;
formulas for orthogonal projections onto the subspaces of such
quadruples for $\chi=(\gamma; 1,1,1,1)$, $\gamma>0$, can be found
in~\cite{Os Sam1}. Another possibility is a quadruple of
one-dimensional subspaces of a two-dimensional space; formulas for
the orthogonal projections are given in~\cite{KNR}.

\subsection{}
For $n\geqslant5$, a description, up to unitary equivalence, of
$n$-tuples of subspaces,
$$
(H;H_1,H_2,\ldots,H_n),
$$
such that $\sum \alpha_k P_{H_k} = 2I$ is a $*$-wild
problem~\cite{Os Sam1}. It contains the problem of describing triples
  of orthogonal projections $P,Q,R$ such that $Q\bot R$,
$$
P+(I-P)+Q+R+(I-Q-R)=2I.
$$
We remark that for all $\gamma$ such that $ \gamma \in
[\frac{n-\sqrt{n^2-4n}}{n}, \frac{n-\sqrt{n^2+4n}}{n}]$, the problem
of a unitary description of $n$-tuples of subspaces satisfying the
condition $\sum \alpha_k P_{H_k} = \gamma I$ is not a type $I$
problem, see ~\cite{KRS,Shu}.

\subsection{}
One can impose an additional condition on the orthoscalar collection
$$
S=(H;H_1,H_2,\ldots,H_n), \qquad n\geqslant5,
$$
which would allow for a description of unitary nonequivalent
irreducible collections of subspaces. This is the condition that the
subspaces of the collection make a representation of a finite
partially ordered set $\Gamma$ with the number of vertices
$|\Gamma|=n$.

In the case where $\Gamma$ is a primitive partially ordered set,
that is, a partially ordered set consisting of $k$ not connected
linearly ordered sets $p_1^{(j)} < p_2^{(j)} < \dots  < p_{m_i}^{(j)}$,
$\sum_{i=1}^k m_i=n, ~p_i^{(j)} \in \Gamma$, $j=1,\dots ,k$, a study of
their representations in a Hilbert space is the same as studying
collections of subspaces
$(H;\{H_i^{(j)}\}_{i=1,\dots ,m_j}^{j=1,\dots ,k})$ such that $H_i^{(j)}
\subset H_{i+1}^{(j)}$ and
$$
\sum_{j=1}^k \sum_{i=1}^{m_j} a_i^{(j)}P_{H_i^{(j)}}=I
$$
for some fixed collection of positive numbers
$\{a_i^{(j)}\}_{i=1,\dots ,m_j}^{j=1,\dots ,k}$.

Let us now consider the subspaces $\{U_1^{(j)}=H_1^{(j)},~
U_2^{(j)}=H_2^{(j)}\ominus
H_1^{(j)},\dots ,U_{m_j}^{(j)}=H_{m_j}^{(j)}\ominus H_{m_j-1}^{(j)},~
U_{0}^{(j)}=H\ominus H_{m_j}^{(j)}\}$, $j=\overline{1,k}$, that are
mutually orthogonal for fixed $j$. We have that
$$
\sum_{i=0}^{m_j} P_{U_i^{(j)}}=I,\quad j=\overline{1,k},\qquad
\sum_{j=1}^k \sum_{i=1}^{m_j} \beta_i^{(j)}P_{U_i^{(j)}}=I,
$$
where $\beta_i^{(j)} = a_i^{(j)}+\dots +a_{m_i}^{(j)}$,
$i=1,\dots ,m_j$. It is clear that this is the same as to consider a
collection of self-adjoint operators
$A_j=\sum_{i=1}^{m_j}\beta_i^{(j)}P_{U_i^{(j)}}$, that is, such that
the spectrum satisfies $\sigma(A_j) \subset
\{0<\beta_i^{(m_j)}<\dots <\beta_i^{(1)}\}$ and $\sum_{j=1}^k
A_j=I$.  For a study of such operators, see~\cite{KR,VMS,Os Sam2,Alb
  Os Sam} and others.

The representation type of such a problem depends on the tree

\bigskip
\begin{center}
  \setlength{\unitlength}{.7mm}
\begin{picture}(62,25)(-20,-1)
\thicklines \put(0,0){\circle*{2}} \drawline(0,0)(10,0)
\put(10,0){\circle*{2}} \put(12.3,-2){$\cdots$}
\put(21,0){\circle*{2}} \drawline(20,0)(30,0)
\put(30,0){\circle*{2}}

\drawline(0,0)(-10,0) \put(-10,0){\circle*{2}}
\put(-19,-2){$\cdots$} \put(-21,0){\circle*{2}}
\drawline(-20,0)(-30,0) \put(-30,0){\circle*{2}}

\drawline(0,0)(0,10) \put(0,10){\circle*{2}} \put(-0.3,13){$\vdots$}
\put(0,21){\circle*{2}}

\put(-10,3){$\ddots$}

\put(-33,-3){$\underbrace{~~~~~~~~~~~~~}$}
\put(7,-3){$\underbrace{~~~~~~~~~~~~~}$}

\put(-20,-10){$m_1$} \put(20,-10){$m_k$}

\put(2,13){$\Bigg\}m_j$}

\end{picture}
\end{center}

\bigskip\bigskip

If the tree corresponds to a Dynkin diagram, then there exists only a
finite number of unitary nonequivalent systems of operators
$\{A_j\}_{j=1}^k$ (all of them are finite dimensional) for any fixed
collection of spectrums; if it is a Euclidean graph, which is an
extended Dynkin graph, then depending on the collection of the
spectrums, that are the numbers
$\{\beta_i^{(j)}\}_{i=1,\dots ,m_j}^{j=1,\dots ,k}$, the number of such
unitary nonequivalent irreducible collections is finite or infinite
(all of them are operators on a finite dimensional space, but the
dimension of $H$ could increase when changing the admissible
spectrums). If this tree contains a Euclidean graph as a proper
subgraph, then there always exists a collection
$\{\beta_i^{(j)}\}_{i=1,\dots ,m_j}^{j=1,\dots ,k}$ for which there is an
irreducible collection of infinite dimensional operators
$\{A_j\}_{j=1}^k$.

\subsection{}
Another additional conditions on the collection
$S=(H;H_1,H_2,\ldots,H_n)$ for the problem of unitary description to
become solvable is to choose a configuration of subspaces with given
possible collections $M_{ij} \subset \{(0,0), (1,0), (0,1), (1,1), 0
< \varphi_1^{(ij)} < \varphi_2^{(ij)} < \dots  <
\varphi_{m_{ij}}^{(ij)} < \frac{\pi}{2} \}$ of irreducible
representations for pairs of subspaces $H_i$ and $H_j$, $i\neq j$, $
i,j=1,\dots ,n$. This is a way one obtains various generalizations of
the Temperley-Lieb algebras~\cite{Grah,PSS} and many others.

\section{Unitarization}\label{Sec:4}

\subsection{Definition}
We will say that a collection of subspaces
$L=(V;V_1,V_2,\ldots,V_n)$ of a linear space $V$ can be
unitarized with a character $\chi = (a_0; a_1,\dots ,a_n)$,
$a_0\geqslant 0$, $a_k>0$, $1\leqslant k\leqslant n$, if $H=V$ can
be endowed with a scalar product in such a way that $\sum_{k=1}^n
a_k P_{H_k} = a_0I$, where $P_{H_k}$ are the orthogonal projections
onto $H_k=V_k$. In other words, the collection $L$ is isomorphic to
an orthoscalar collection $S$ with the character $\chi$.

It is clear that a unitarization with a character $\chi = (a_0;
a_1,\dots ,a_n)$ is the same as a unitarization with the character
$\chi^\prime = (a_0/\gamma; a_1/\gamma,\dots ,a_n/\gamma)$, $\gamma>0$.

If $a_0=1$, we will write the character $\chi = (a_0; a_1,\dots ,a_n)$
as $\chi = (a_1,\dots ,a_n)$. Thus, $(a_0;
a_1,\dots ,a_n)=(\frac{a_1}{a_0},\dots ,\frac{a_n}{a_0})$.

It is clear~\cite{KNR} that if an irreducible collection of
subspaces of a Hilbert space is orthoscalar, then it is brick.
Hence, an indecomposable collection in a linear space can be
unitarized if it is brick.

Similarly to~\cite{KNR}, one can conclude that if for a fixed
character $\chi$ there exists a unitarization of an indecomposable
collection $L$, then it is unique, that is, if systems of subspaces
$S$ and $\tilde{S}$ are orthoscalar with the character $\chi$ and
are isomorphic to $L$, then $S$ and $\tilde{S}$ are unitary
equivalent.

In this paper we study the following problems.
\begin{enumerate}
\item[1)] For what brick collections there is a unitarization with
  some character~?
\item[2)] How to describe the characters that allow for a
  unitarization of a given brick collection~?  
\end{enumerate}

Let us remark that statements connected with the unitarization
problem are also contained in~\cite{Ki,CBG} and others.

\subsection{}
It is not difficult to get an answer to the above questions for
collections of $n$ subspaces if $n=2$ or $n=3$.

For $n=2$, we have the following.
\begin{itemize}
\item[] $(\mathbb C; 0, 0)$ can be unitarized with the characters
  $(0; a_1, a_2)$, where $a_1>0,a_2>0$ are arbitrary positive
  numbers.
\item[] $(\mathbb C; \mathbb C, 0)$ can be unitarized with the
  characters $(a_0; a_1, a_2)$, where $a_0=a_1$, $a_1>0, a_2>0$. For
  $(\mathbb C; 0, \mathbb C)$, the answers are obtained by a
  corresponding permutation.
\item[] $(\mathbb C; \mathbb C, \mathbb C)$ can be unitarized with
  the characters $(a_0; a_1, a_2)$, where $a_0>0$, $a_1>0$, $a_2>0$,
  $a_0=a_1+a_2$.
\end{itemize}

Let $n=3$. Note that if one of the subspaces of the collection is
zero, then the problem of describing the characters is reduced to the
problem with fewer subspaces, since the coefficients corresponding
to the zero subspace can be chosen arbitrarily and it does not
influence the others. For example, for $(\mathbb C; \mathbb C,
\mathbb C, 0)$, we get that a unitarization is only possible with
the characters $(a_0; a_1, a_2, a_3)$, where $a_i>0,0\leqslant
i\leqslant3$, $a_0=a_1+a_2$.

For $n=3$ there are only two collections without zero subspaces.
\begin{itemize}
\item[] $(\mathbb C; \mathbb C, \mathbb C, \mathbb C)$ can be
  unitarized with the characters $(a_0; a_1, a_2, a_3)$, where
  $a_i>0,0\leqslant
  i\leqslant3$, $a_0=a_1+a_2+a_3$.
\item[] $(\mathbb C^2; \mathbb C(1,0), \mathbb C(0,1), \mathbb C
  (1,1))$ can be unitarized with the characters $(a_0; a_1, a_2,
  a_3)$, where $0<a_i<a_0,0\leqslant i\leqslant3$,
  $2a_0=a_1+a_2+a_3$.
\end{itemize}

\subsection{}
In the general situation, the following propositions are useful when
studying unitarization of $n$-tuples of subspaces.

\begin{proposition}\label{P:4.1}
  A collection of subspaces $L=(V;V_1,\dots ,V_n)$ can be unitarized
  with a character $\chi$ if and only if $\Phi ^{+}L$ and $\Phi
  ^{-}L$ can be unitarized with the characters $c^{-}\chi$ and $c
  ^{+}\chi$, correspondingly, with the condition that $\Phi
  ^{+}L\neq 0$ and $\Phi ^{-}L\neq 0$, correspondingly.
\end{proposition}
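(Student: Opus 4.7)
The plan is to prove this by translating between the Coxeter functors on linear-space systems ($\Phi^\pm$) and the Coxeter functors on Hilbert-space systems ($F^\pm$), then invoking the orthoscalar character transformation rules recalled in Section~\ref{Sec:3}.

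For the forward direction, suppose $L$ is unitarizable with character $\chi$: by definition, there is an orthoscalar system $S$ with character $\chi$ such that $L \simeq S$ (forgetting the inner product). Apply the Hilbert-space functor $\iiSB{F}$ to obtain $\iiSB{F}S$, which Section~3.4 guarantees is orthoscalar with character $\iiSC{c}(\chi)$. Applying $\iiSC{F}$ in turn yields $F^+ S = \iiSC{F}\iiSB{F}S$, orthoscalar with character $\iiSB{c}(\iiSC{c}(\chi)) = c^-\chi$. Under the correspondence recalled in Section~3.3, $F^+ S \simeq \Phi^+ L$ as systems in a linear space (when $\Phi^+ L \neq 0$), so $\Phi^+ L$ is unitarizable with character $c^-\chi$. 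The argument for $\Phi^- L$ is symmetric.

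For the reverse direction, suppose $\Phi^+ L \neq 0$ and $\Phi^+ L$ is unitarizable with character $c^-\chi$, via an isomorphism $\Phi^+ L \simeq S'$ with $S'$ orthoscalar. Apply $F^-$: by the same character-transformation rule $F^- S'$ is orthoscalar with character $c^+(c^-\chi) = \chi$, using $c^+ c^- = \mathrm{Id}$. Since $F^- F^+ = \mathrm{Id}$ on systems where $F^+$ does not vanish, and since $\Phi^+ L \neq 0$ implies $\Phi^- \Phi^+ L \simeq L$, we get $F^- S' \simeq \Phi^- \Phi^+ L \simeq L$, so $L$ is unitarizable with character $\chi$. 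Again the argument with the roles of $\Phi^+$ and $\Phi^-$ interchanged is analogous.

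The only genuinely delicate point is bookkeeping the composition order: one must verify that $F^+ = \iiSC{F}\iiSB{F}$ sends a character $\chi$ to $\iiSB{c}(\iiSC{c}(\chi)) = c^- \chi$ (not $c^+\chi$), because the character rule for $\iiSC{F}$ uses $\iiSB{c}$ and vice versa. Once this is matched correctly with the statement of the proposition, the main obstacle is the nondegeneracy hypothesis: we need $\Phi^+ L \neq 0$ (respectively $\Phi^- L \neq 0$) so that the linear-space identities $\Phi^- \Phi^+ L \simeq L$ (respectively $\Phi^+ \Phi^- L \simeq L$) hold, allowing us to recover $L$ from its Coxeter image. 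This condition is exactly the one carried in the statement, so no further restriction is needed.
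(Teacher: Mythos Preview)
Your proof is correct and follows essentially the same route as the paper: translate unitarization of $L$ into an orthoscalar system $S$, push through the Hilbert-space Coxeter functors $F^\pm$, track the character via the rules in Section~\ref{Sec:3}.4, and identify the result with $\Phi^\pm L$ via the correspondence in Section~\ref{Sec:3}.3. The paper's converse is the terse one-liner ``$F^+F^-=F^-F^+=\Id$''; your version just unpacks this (applying $F^-$ to $S'\simeq\Phi^+L$ and using $\Phi^-\Phi^+L\simeq L$ under the nondegeneracy hypothesis), which is the same argument made explicit.
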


\begin{proof}
  Indeed, let $L$ be isomorphic to an orthoscalar collection $S$
  with a character $\chi$, and let $\Phi ^{+}L\neq 0$. Then $F^+ S
  \neq 0$ and $F^+ S$ is orthoscalar with the character $c^- \chi$.
  Since $F^+ S$ is isomorphic to $\Phi ^{+}L$, we see that $\Phi
  ^{+}L$ can be unitarized with the character $c^- \chi$. Similarly
  we obtain that $\Phi ^{-}L$ can be unitarized with the character
  $c^+ \chi$ if $\Phi ^{-}L\neq 0$. The converse statement follows,
  since $F^+F^-=F^-F^+=\Id$.
\end{proof}

\begin{proposition}\label{P:4.2}
  If a collection of subspaces, $L=(V;V_1,\dots ,V_n)$, of a linear
  space $V$ can be unitarized with some character, then the
  collections $L_0^\prime=(V;V_1,\dots ,V_n,0)$,
  $L_1^\prime=(V;V_1,\dots ,V_n, V)$, and $L^\prime=(V;V_1,\dots,V_n,
  V_k)$, $1\leqslant k \leqslant n$, can be unitarized with some
  characters.
\end{proposition}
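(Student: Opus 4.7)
The plan is to reuse the scalar product that unitarizes $L$ and to exhibit explicit characters for the three enlarged collections. Assume $V$ is endowed with a scalar product and positive numbers $(a_0; a_1,\dots,a_n)$ such that $\sum_{k=1}^n a_k P_{V_k}=a_0 I$, where $P_{V_k}$ is the orthogonal projection onto $V_k$ with respect to this scalar product. In all three cases I keep the same scalar product on $V$, so the projections onto $V_1,\dots,V_n$ are unchanged; only the coefficients need to be adjusted, and the new projection has a trivial form ($0$, $I$, or $P_{V_k}$).

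For $L_0'=(V;V_1,\dots,V_n,0)$, note that $P_0=0$, so for any $a_{n+1}>0$ the identity $\sum_{k=1}^n a_k P_{V_k}+a_{n+1}P_0=a_0 I$ holds; hence $\chi_0'=(a_0;a_1,\dots,a_n,a_{n+1})$ unitarizes $L_0'$. For $L_1'=(V;V_1,\dots,V_n,V)$, note that $P_V=I$, so
\begin{equation*}
\sum_{k=1}^n a_k P_{V_k}+a_{n+1}P_V=(a_0+a_{n+1})I,
\end{equation*}
and the character $\chi_1'=(a_0+a_{n+1};a_1,\dots,a_n,a_{n+1})$ unitarizes $L_1'$ for any $a_{n+1}>0$. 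For $L'=(V;V_1,\dots,V_n,V_k)$, split the coefficient of $P_{V_k}$ into two positive pieces: choose any $b,c>0$ with $b+c=a_k$ (e.g.\ $b=c=a_k/2$). Then
\begin{equation*}
\Bigl(\sum_{j\neq k} a_j P_{V_j}\Bigr)+b\,P_{V_k}+c\,P_{V_k}=\sum_{j=1}^n a_j P_{V_j}=a_0 I,
\end{equation*}
so the character $(a_0;a_1,\dots,a_{k-1},b,a_{k+1},\dots,a_n,c)$ unitarizes $L'$.

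There is no real obstacle: the argument is just the observation that adjoining the trivial subspaces $0$ or $V$ contributes $0$ or $I$ to the linear combination, and duplicating a subspace amounts to splitting its coefficient. The only thing to check in each case is that all components of the new character are strictly positive, which is immediate from the choices above.
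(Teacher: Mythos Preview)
Your proof is correct and follows essentially the same approach as the paper: keep the unitarizing scalar product and adjust the character in the obvious way (append an arbitrary coefficient for the zero subspace, add a positive constant to $a_0$ for the full subspace, and split $a_k$ into two positive parts for the duplicated subspace). The paper merely normalizes $a_0=1$ and makes specific choices (taking $a_{n+1}=1$, rescaling by $1/2$ for $L_1'$, and choosing $b=c=a_k/2$), which are special cases of what you wrote.
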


\begin{proof}
  Let $L$ be isomorphic to an orthoscalar collection
  $S=(H;H_1,\dots,H_n)$ with a character $\chi =(a_1,\dots ,a_n)$.
  Then the collection $S_0^\prime=(H;H_1,\dots,H_n, 0)$ is
  isomorphic to $L_0^\prime$ and is orthoscalar with the character
  $(a_1,\dots,a_n,1)$, the collection $S_1^\prime=(H;H_1,\dots,H_n,
  H)$ is isomorphic to $L_1^\prime$ and is orthoscalar with the
  character $(a_1/2,\dots,a_n/2,1/2)$, the collection
  $S^\prime=(H;H_1,\dots,H_n, H_k)$ is isomorphic to $L^\prime$ and
  is orthoscalar with the character $(a_1,\dots,a_k/2,
  \dots,a_n,a_k/2)$.
\end{proof}

\begin{theorem}\label{Th:1.1}
  Let $H$ be a linear space of finite dimension $m$. Let
  $S=(H;H_1,H_2,\ldots,H_n)$ be a brick collection of
  one-dimensional subspaces of $H$, that is, $\dim H_i=1$ (note that
  brickness is equivalent to indecomposability in this case). Then
  $S$ is unitarizable with some character.
\end{theorem}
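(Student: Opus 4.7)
The plan is to exhibit an explicit scalar product on $H$ and positive weights $a_i$ realizing the orthoscalar relation $\sum a_i P_{H_i} = I$ (normalizing $a_0 = 1$). The idea is to recast the problem as a matrix equation for the Gram matrix of the scalar product and solve it by inspection.

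First I would observe that brickness forces the subspaces $H_i$ to span $H$. If instead their sum $W=\sum_i H_i$ were a proper subspace, then for any vector-space complement $W'$ of $W$ the operator $X$ equal to the identity on $W$ and zero on $W'$ would satisfy $X(H_i)\subset H_i$ for every $i$ without being a scalar, contradicting $\End(L)=\mathbb{C}I$.

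Next, fix an arbitrary basis identifying $H\cong\mathbb{C}^m$ and choose nonzero column vectors $w_i\in H_i$. Parametrize scalar products on $H$ by Hermitian positive-definite matrices $G$ via $\langle x,y\rangle=y^*Gx$; a direct calculation shows that the orthogonal projection onto the line $H_i=\mathbb{C}w_i$ is
\[
P_{H_i}\;=\;\frac{w_iw_i^*G}{w_i^*Gw_i}.
\]
Substituting into $\sum a_iP_{H_i}=I$ and right-multiplying by $G^{-1}$, the orthoscalar condition becomes the matrix equation
\[
\sum_{i=1}^n\frac{a_i}{w_i^*Gw_i}\,w_iw_i^*\;=\;G^{-1}.
\]
This admits an obvious positive solution: define
\[
G\;:=\;\Bigl(\sum_{i=1}^n w_iw_i^*\Bigr)^{\!-1},\qquad a_i\;:=\;w_i^*G\,w_i\quad(i=1,\dots,n).
\]
Since the $w_i$ span $\mathbb{C}^m$ by the first step, $\sum_i w_iw_i^*$ is positive definite, so $G$ is well-defined and positive definite, and hence each $a_i>0$. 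A one-line check then gives $\sum_i a_iP_{H_i}=\bigl(\sum_i w_iw_i^*\bigr)G=G^{-1}G=I$, so $L$ is unitarized with character $(1;a_1,\dots,a_n)$.

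Regarding difficulty: once the orthoscalar relation is rewritten as a matrix equation for $G$, no serious obstacle remains, because $G^{-1}$ is chosen to be already a manifestly positive combination of the rank-one operators $w_iw_i^*$. The brick hypothesis is used only through its very mild consequence that the $w_i$ span $H$; in fact the same argument proves the stronger statement that any collection of one-dimensional subspaces whose sum equals $H$ is unitarizable, and that the freedom in choosing a positive combination $\sum_i b_iw_iw_i^*$ in place of the uniform choice produces an $(n-1)$-parameter family of admissible characters.
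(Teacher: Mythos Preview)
Your proof is correct and is essentially the same as the paper's: both fix an auxiliary inner product, form the operator $T=\sum_i w_iw_i^*$ (the paper writes this as $\sum P_{H_k}$ with respect to an arbitrary initial product), take the new Gram matrix to be $G=T^{-1}$, and read off $a_i=w_i^*Gw_i=(v_i,v_i)_2/(v_i,v_i)_1$. The only differences are cosmetic---you work in coordinates and spell out explicitly that brickness forces the $H_i$ to span $H$ (hence $T$ invertible), whereas the paper asserts this in one line; your closing remark that only the spanning property is needed is a correct slight sharpening.
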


\begin{proof}
  Let us introduce an arbitrary scalar product $(\cdot,\cdot)_1$ on
  $H$. Let $T=\sum P_{H_k}$, where $P_{H_k}$ are orthogonal
  projections onto $H_k$ with respect to the scalar product
  $(\cdot,\cdot)_1$. Since the collection $S$ is brick, the operator
  $T$ is nondegenerate. Also, the operator $T$ is nonnegative, being
  a sum of nonnegative operators. It is clear that $T^{-1}$ is also
  nonnegative.
  
  Define a new scalar product by $(\cdot,\cdot)_2 =
  (T^{-1}(\cdot),\cdot)_1$. Such a definition is correct, since
  $T^{-1}$ is nondegenerate and nonnegative.
  
  Let $v_i \in H_i, i=\overline{1,n}$ $ (v_i\neq0)$. Then for all $ v
  \in H$ and all $i$,
  $$
  P_{H_i}(v) = \frac{{(v,v_i)}_1}{{(v_i,v_i)}_1}\cdot v_i.
  $$
  
  Let $P_{H_i}^{\prime}$ be orthogonal projections onto $H_i$ with
  respect to the scalar product $(\cdot,\cdot)_2$. Then, for all $ v
  \in H$,
  $$
  P_{H_i}^{\prime}(v) = \frac{{(v,v_i)}_2}{{(v_i,v_i)}_2}\cdot v_i
  = \frac{{(T^{-1}(v),v_i)}_1}{{(v_i,v_i)}_2}\cdot v_i=
  \frac{{(v_i,v_i)}_1}{{(v_i,v_i)}_2}\cdot P_{H_i}(T^{-1}(v)).
  $$
  We see that
  $$
  \sum \frac{{(v_i,v_i)}_2}{{(v_i,v_i)}_1}\cdot
  P_{H_i}^{\prime}(v) = \sum P_{H_i}(T^{-1}(v)) = T(T^{-1}(v)) = v,
  $$
  that is
  $$
  \sum \frac{{(v_i,v_i)}_2}{{(v_i,v_i)}_1}\cdot P_{H_i}^{\prime} =
  I,
  $$
  which means that the collection $S$ with the character $\chi =
  \{\frac{{(v_i,v_i)}_2}{{(v_i,v_i)}_1}, i=\overline{1,n}\}$ can be
  unitarized.
\end{proof}

\begin{theorem}\label{Th:1.2}
  \begin{enumerate}
  \item[1)] All collections of subspaces of a linear space, which
    are obtained from brick collections of one-dimensional subspaces
    by adding its copies (see Proposition~\ref{P:4.2}) and by
    applying the Coxeter functors, can be unitarized with some
    character.
  \item[2)] In particular, any brick quadruples of spaces can be
    unitarized with some character.
  \end{enumerate}
\end{theorem}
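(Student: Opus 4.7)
The plan is to deduce Theorem~\ref{Th:1.2} from Theorem~\ref{Th:1.1} together with Propositions~\ref{P:4.1} and~\ref{P:4.2} and the structural information about brick quadruples recalled in Section~2. Part~1 is essentially a bookkeeping induction: Theorem~\ref{Th:1.1} provides unitarization of the initial brick collection $L_0$ of one-dimensional subspaces with some character $\chi_0$; Proposition~\ref{P:4.2} then unitarizes any collection obtained from $L_0$ by appending a copy of a subspace, a copy of $V$, or a zero subspace, with an explicitly transformed character; and Proposition~\ref{P:4.1} propagates unitarizability under each application of $\Phi^{+}$ or $\Phi^{-}$, transforming the character by $c^{-}$ or $c^{+}$. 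Induction on the total number of such operations gives the statement.

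For Part~2, I would split the classification from Section~2 into cases by dimension vector and show that each case arises as in Part~1. For the real-root dimensions with $\deff \neq 0$, Section~2 asserts that every brick quadruple with such a dimension is obtained by iterated Coxeter functors from a quadruple of subspaces of a one-dimensional space. Such a seed quadruple has each $V_i \in \{0,\mathbb{C}\}$ in $V=\mathbb{C}$; it can be built from the one-dimensional brick collection $(\mathbb{C};\mathbb{C})$ by the constructions of Proposition~\ref{P:4.2}, so Part~1 applies. The remaining real-root case, $D_{3,4}(1,0)=(1;0,0,1,1)$ (the only brick instance of $D_{i,j}(2m+1,0)$, since brickness forces $m=0$), is itself a seed quadruple and handled identically.

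For the imaginary-root dimension $\sigma=(2;1,1,1,1)$, Section~2 lists the brick quadruples as the family $S_\mu$ with $\mu\in\mathbb{C}\setminus\{0,1\}$ together with the permutations $S_{i,j}$. Each $S_\mu$ consists of four one-dimensional subspaces of $\mathbb{C}^2$ and is brick, so Theorem~\ref{Th:1.1} applies verbatim. Each $S_{i,j}$ is obtained from a brick triple of one-dimensional subspaces in $\mathbb{C}^2$ (which is unitarizable by Theorem~\ref{Th:1.1}) by duplicating one of the subspaces, so Proposition~\ref{P:4.2} gives the needed unitarization.

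The main obstacle is not algebraic but taxonomic: one must be sure that the classification recalled in Section~2 exhausts all brick quadruples, and that the reduction scheme indicated there (Coxeter functors applied to seed quadruples in dimension one, plus the separate imaginary-root list) genuinely covers each brick quadruple, so that no brick case escapes the argument. Once that classification is accepted, Part~2 is immediate from Part~1.
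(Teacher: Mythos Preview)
Your proof is correct and follows essentially the same route as the paper's: Part~1 by combining Theorem~\ref{Th:1.1} with Propositions~\ref{P:4.1} and~\ref{P:4.2}, and Part~2 by splitting into the discrete (real-root) case, handled via Coxeter reduction to one-dimensional seeds, and the continuous (imaginary-root) case of dimension $(2;1,1,1,1)$, handled by Theorem~\ref{Th:1.1}. Your separate treatment of the degenerate quadruples $S_{i,j}$ via Proposition~\ref{P:4.2} is correct but slightly more than needed, since each $S_{i,j}$ is itself a brick collection of one-dimensional subspaces of $\mathbb{C}^2$ and Theorem~\ref{Th:1.1} applies to it directly, which is how the paper handles the entire $(2;1,1,1,1)$ family in one stroke.
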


\begin{proof}
  The first part of the theorem follows directly from
  Propositions~\ref{P:4.1},~\ref{P:4.2}, and Theorem~\ref{Th:1.1}.
  
  To prove the second part, let us recall (see Section~2) that any
  brick quadruple has either discrete or continuous spectrum. In the
  case of a discrete spectrum, the brick quadruples are obtained
  from the simplest ones by applying the Coxeter functors. Since the
  simplest quadruples are one-dimensional, they will be
  unitarizable.  Hence, by Proposition~\ref{P:4.1}, all brick
  quadruples, in the case of a discrete spectrum, are also
  unitarizable. For a continuous spectrum, all brick quadruples have
  the dimension $(2;1,1,1,1)$. Hence, they are unitarizable by
  Theorem~\ref{Th:1.1}.
\end{proof}

\section{A description of characters for which 
  representations of a quadruple of subspaces of $V$ can be
  unitarized}

\subsection{}
All brick quadruples of subspaces are of only two types; the
generalized dimension is a real root (we will call this case
discrete) and an imaginary root (we will call it continuous case).
In the discrete case, for every dimension there is exactly one brick
quadruple. All possible dimensions in this case can be written as
follows:
\begin{itemize}
\item [] $\mathcal{D}_4(2m+1, -1) = (2m+1; m,m,m,m+1), ~ \deff=-1$,
\item[] $\mathcal{D}_4(2m+1, 1) = (2m+1; m+1,m+1,m+1,m), ~ \deff=1$,
\item[] $\mathcal{D}_4(2m, -1) = (2m; m,m,m,m-1), ~ \deff=-1$,
\item[] $\mathcal{D}_4(2m, 1) = (2m; m,m,m,m+1), ~ \deff=1$,
\end{itemize}
and permutations of the spaces $\mathcal{D}_i( \cdot, \cdot),
i=1,2,3$;
\begin{itemize}
\item[] $\mathcal{D}_0(2m+1, -2) = (2m+1; m,m,m,m), ~ \deff=-2$,
\item[] $\mathcal{D}_0(2m, 2) = (2m+1; m+1,m+1,m+1,m+1), ~ \deff=2$.
\end{itemize}

\begin{theorem}\label{Th:2.1}
  Conditions on the character such that every brick quadruple can be
  unitarized in the discrete case can be written as follows:
  \begin{itemize}
  \item[] $\mathcal{D}_4(2m+1,-1):~m\cdot
    def(a)+a_i>0,~i=\overline{1,4},~m\cdot def(a)=a_4-a_0$, where
    $def(a) = 2a_0-a_1-a_2-a_3-a_4$,
  \item[] $\mathcal{D}_4(2m+1,1):~a_i-m\cdot
    def(a)>0,~i=\overline{1,4},~(m+1)\cdot def(a)+a_4-a_0=0$,
  \item[] $\mathcal{D}_4(2m,-1):~(m-1)\cdot
    def(a)+a_0-a_i>0,~i=\overline{1,4},~m\cdot def(a)+a_4=0$,
  \item[] $\mathcal{D}_4(2m,1):~a_0-a_i-m\cdot
    def(a)>0,~i=\overline{1,4},~m\cdot def(a)=a_4$,
  \item[] $\mathcal{D}_0(4m+1,-2):~m\cdot
    def(a)+a_i>0,~i=\overline{1,4},~2m\cdot def(a)+a_0=0$,
  \item[] $\mathcal{D}_0(4m+1,2):~a_i-m\cdot
    def(a)>0,~i=\overline{1,4},~a_0-(2m+1)\cdot def(a)=0$,
  \item[] $\mathcal{D}_0(4m+3,-2):~m\cdot
    def(a)+a_0-a_i>0,~i=\overline{1,4},~(2m+1)\cdot def(a)+a_0=0$,
  \item[] $\mathcal{D}_0(4m+3,2):~a_0-a_i-m\cdot
    def(a)>0,~i=\overline{1,4},~a_0-(2m+2)\cdot def(a)=0$.
  \end{itemize}
\end{theorem}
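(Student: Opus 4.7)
The plan is to reduce the unitarization problem at each real-root dimension to a one-dimensional case by iterated application of Proposition~\ref{P:4.1}, and then read off the equality and inequalities from the resulting chain of character transformations. The starting point is that the deficiency $D := \deff(a) = 2a_0 - \sum_{i=1}^4 a_i$ is invariant under both $c^{+}$ and $c^{-}$. This follows by a short computation from $\iiSB{c}(a_0;a_1,\dots,a_n) = (a_0;\,a_0-a_1,\dots,a_0-a_n)$ and $\iiSC{c}(a_0;a_1,\dots,a_n) = (\sum_i a_i - a_0;\,a_1,\dots,a_n)$. Using the invariance, induction on $k$ yields the closed form: $(c^+)^k$ sends $(a_0;a_1,\dots,a_4)$ to $(a_0 + kD;\,a_1^{(k)},\dots,a_4^{(k)})$, where $a_i^{(2j)} = a_i + jD$ and $a_i^{(2j+1)} = a_0 + jD - a_i$, with an analogous formula for $c^{-}$.

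For each of the eight dimension types I would determine the one-dimensional base case reached along the Coxeter chain and the number $K$ of steps. For example, $\mathcal{D}_4(2m+1,-1)$ reduces via $K = 2m$ applications of $\Phi^-$ to $(1;0,0,0,1)$, so that $(c^+)^{2m}\chi = (a_0 + 2mD;\,a_1 + mD,\dots,a_4 + mD)$. The orthoscalar relation at this base ($a_4^{(2m)} = a_0^{(2m)}$) immediately gives $mD = a_4 - a_0$, which is the equality in the theorem; the four inequalities $mD + a_i > 0$ are precisely the positivity conditions on the base character. Each of the remaining seven cases follows the same template: identify $K$ and its parity, compute $(c^\pm)^K\chi$ from the closed form, impose the (single-equation) trace relation at the one-dimensional terminus, and read off the four positivity constraints.

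To justify the inductive chain, Proposition~\ref{P:4.1} requires that the transformed character be valid at every intermediate step (nonnegative $a_0$, strictly positive $a_i$ for $i \geq 1$). Thanks to the monotonicity in $j$ of the expressions $a_i + jD$ and $a_0 + jD - a_i$, whose direction is governed by $\mathrm{sgn}(D)$, these intermediate positivity conditions collapse to conditions at the endpoints of the chain, which are captured by the four inequalities printed in the theorem (together with the trivial constraints $a_i > 0$ at the target step $k=0$).

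The main obstacle is the case analysis rather than any single deep step. Matching each of the eight dimensions to its correct base case and correct $K$ is tedious; the split of $\mathcal{D}_0(\cdot,\pm2)$ into the $4m+1$ and $4m+3$ subfamilies is exactly the parity issue, as their Coxeter chains terminate at different one-dimensional bases. Verifying that each printed formula (the precise coefficient of $\deff(a)$ and the signs in each inequality) matches the output of the corresponding iterate is where the bulk of the bookkeeping lies.
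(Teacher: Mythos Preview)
Your proposal is correct and follows essentially the same route as the paper: reduce each real-root dimension to a one-dimensional base case via iterated Coxeter functors (Proposition~\ref{P:4.1}), exploit the invariance of $\deff(a)$ to write the iterates $(c^{\pm})^k\chi$ in closed form, and then read off the single trace equality and four positivity inequalities from the base conditions at $\mathcal{D}_4(1,-1)$ or $\mathcal{D}_0(1,-2)$. Your explicit treatment of the intermediate positivity constraints via monotonicity is a point the paper passes over silently, but it is consistent with (indeed implied by) the iff in Proposition~\ref{P:4.1}, so the two arguments are the same in substance.
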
  

\begin{proof}
  Using $\iiSC{c}$ and $\iiSB{c}$ we can write that
  $$
  \begin{array}{c}
    \mathcal{D}_4(2m+1,-1) =
    {(\iiSB{c}\iiSC{c})}^{2m}\mathcal{D}_4(1,-1),\\[2mm]
    \mathcal{D}_4(2m+1,1) =
    {(\iiSB{c}\iiSC{c})}^{2m}\iiSB{c}\mathcal{D}_4(1,-1),\\[2mm]
    \mathcal{D}_4(2m,-1) =
    {(\iiSB{c}\iiSC{c})}^{2m-1}\mathcal{D}_4(1,-1),\\[2mm]
    \mathcal{D}_4(2m,1) =
    {(\iiSB{c}\iiSC{c})}^{2m-1}\iiSB{c}\mathcal{D}_4(1,-1),\\[2mm]
    \mathcal{D}_0(2m+1,-2) =
    {(\iiSB{c}\iiSC{c})}^{m}\mathcal{D}_0(1,-2),\\[2mm]
    \mathcal{D}_0(2m+1,2) = 
    {(\iiSB{c}\iiSC{c})}^{m}\iiSB{c}\mathcal{D}_0(1,-2).
  \end{array}
  $$
  Properties of Coxeter functors show that if there exists a
  brick collection of subspaces of dimension $d$, which can be
  unitarized with a character $\chi = (a_0; a_1,a_2,a_3,a_4)$, then
  there exists a brick collection of subspaces with the dimension
  $\iiSB{c}(d)$, which can be unitarized with the character
  $\iiSC{c}(\chi)$, as well as a brick collection of subspaces with
  the dimension $\iiSC{c}(d)$, unitarizable with the character
  $\iiSB{c}(\chi)$. Thus, knowing the characters that permit the
  simplest quadruples to be unitarized, we can find the characters
  allowing a unitarization of other quadruples in the discrete case.
  
  It is clear that a quadruple with the dimension
  $\mathcal{D}_4(1,-1)=(1;0,0,0,1)$ can be unitarized with $\chi_4 =
  (a_0; a_1,a_2,a_3,a_4)$ if and only if $a_0=a_4, ~ a_i>0,
  i=\overline{0,4}$; a quadruple with the dimension
  $\mathcal{D}_0(1,-2)=(1;0,0,0,0)$ can be unitarized with $\chi_1 =
  (a_0; a_1,a_2,a_3,a_4)$ if and only if $a_0=0, ~ a_i>0,
  i=\overline{1,4}$.
  
  We get, for example, that for the dimension
  $\mathcal{D}_4(2m+1,-1) =
  {(\iiSB{c}\iiSC{c})}^{2m}\mathcal{D}_4(1,-1)$, a collection of
  subspaces can be unitarized with a character $\chi$ if and only if
  $\chi = {(\iiSC{c}\iiSB{c})}^{2m}\chi_4$. In other words, if
  $\chi$ is a character that allows for a unitarization of a
  quadruple with the dimension
  ${(\iiSB{c}\iiSC{c})}^{2m}\mathcal{D}_4(1,-1)$, then
  ${(\iiSB{c}\iiSC{c})}^{2m}\chi$ and $\chi_4$ must satisfy the same
  conditions.

  It is not difficult to calculate that
  \begin{equation*}
    \begin{aligned}
      {(\iiSB{c}\iiSC{c})}^{2m}(a_0; a_1,a_2,a_3,a_4) &= (2m\cdot
      \deff(a)+a_0; 
      m\cdot \deff(a)+a_1,m\cdot \deff(a)+a_2,\\
      &m\cdot
      \deff(a)+a_3,m\cdot \deff(a)+a_4),\\
      {(\iiSB{c}\iiSC{c})}^{2m}\iiSB{c}(a_0; a_1,a_2,a_3,a_4) & =
      (2m\cdot \deff(a)+a_0;
      m\cdot \deff(a)+a_0-a_1,m\cdot \deff(a)+a_0-a_2,\\
      &m\cdot
      \deff(a)+a_0-a_3,m\cdot
      \deff(a)+a_0-a_4),\\
      {(\iiSB{c}\iiSC{c})}^{2m+1}(a_0; a_1,a_2,a_3,a_4)& =
      ((2m+1)\cdot \deff(a)+a_0;
      m\cdot \deff(a)+a_0-a_1,\\
      &m\cdot \deff(a)+a_0-a_2,
      m\cdot
      \deff(a)+a_0-a_3,m\cdot \deff(a)+a_0-a_4),\\
      {(\iiSB{c}\iiSC{c})}^{2m+1}\iiSB{c}(a_0; a_1,a_2,a_3,a_4) &=
      ((2m+1)\cdot \deff(a)+a_0; 
      (m+1)\cdot \deff(a)+a_1,\\
      &(m+1)\cdot
      \deff(a)+a_2,
      (m+1)\cdot
      \deff(a)+a_3,(m+1)\cdot \deff(a)+a_4).\\
      {\iiSC{c}(\iiSB{c}\iiSC{c})}^{2m}(a_0; a_1,a_2,a_3,a_4) &=
      (a_0-(2m+1)\cdot def(a); a_1-m\cdot def(a),\\
      &a_2-m\cdot
      def(a),a_3-m\cdot def(a),a_4-m\cdot def(a))\\
      {\iiSC{c}(\iiSB{c}\iiSC{c})}^{2m+1}(a_0; a_1,a_2,a_3,a_4) &=
      (a_0-(2m+2)\cdot def(a); a_0-a_1-(m+1)\cdot def(a),\\
      &a_0-a_2-(m+1)\cdot def(a),
      a_0-a_3-(m+1)\cdot def(a),\\
      &a_0-a_4-(m+1)\cdot def(a)).
    \end{aligned}
  \end{equation*}
  By using these formulas and the conditions on the character that
  allow for a unitarization of the simplest quadruples
  $\mathcal{D}_4(1,-1)$ and $\mathcal{D}_0(1,-2)$, we obtain
  conditions on the character for other collections.
\end{proof}

Let us remark that similar considerations were used in~\cite{KPS},
although for a different purpose.

Also note that Theorem~\ref{Th:2.1} shows that any brick quadruple
with the character $(\gamma; 1,1,1,1)$ can be unitarized in the
discrete case, which was proved in~\cite{MS2}. Namely, for a
quadruple of dimension $d=(d_0; d_1,d_2,d_3,d_4)$, one should take
$\gamma=2-\deff(d)/d_0$. A simple check shows that the conditions of
Theorem~\ref{Th:2.1} are satisfies.

\subsection{}
Let us consider the continuous case. Here, brick collections exist
only for the dimension $(2;1,1,1,1)$. There is a series of
quadruples parametrized with $\mu \in \mathbb C \setminus \{0,1\}$,
$$
S_{\mu} = (<e_1, e_2>; <e_1>, <e_2>, <e_1+\mu e_2>, <e_1+e_2>),
$$
and two degenerate quadruples,
$$
S_{3,4} = (<e_1, e_2>; <e_1>, <e_2>, <e_1+e_2>, <e_1+e_2>),
$$
and $S_{i,j}$ obtained by permutation of the subspaces.

\begin{theorem}\label{Th:2.2}
  \begin{itemize}
  \item [a)] The degenerate representation of $S_{3,4}$ can be
    unitarized with the character $\chi = (a_0; a_1,a_2,a_3,a_4)$ if
    $a_1+a_2>a_3+a_4,~ a_1<a_2+a_3+a_4,~a_2<a_1+a_3+a_4,~2a_0 =
    a_1+a_2+a_3+a_4,~ a_i>0$.
    
  \item[b)] All nondegenerate representations can be unitarized with
    $\chi = (a_0; a_1,a_2,a_3,a_4)$ if and only if 
    $$
    2a_i<\sum_{j=1}^{4}a_j,\quad i=\overline{1,4},\qquad
    2a_0=\sum_{j=1}^{4}a_j,\quad 0<a_i<a_0.
    $$
  \end{itemize}
\end{theorem}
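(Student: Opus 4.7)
My strategy splits along the degenerate/nondegenerate distinction: part (a) reduces to the three-subspace calculation of Section~2, while part (b) requires a direct two-dimensional construction.

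For part (a), the quadruple $S_{3,4}$ is obtained from the brick triple $T=(\mathbb{C}^2;\langle e_1\rangle,\langle e_2\rangle,\langle e_1+e_2\rangle)$ by duplicating the third subspace; since $a_3 P_{H_3}+a_4 P_{H_3}=(a_3+a_4)P_{H_3}$, a scalar product unitarizes $S_{3,4}$ with $(a_0;a_1,a_2,a_3,a_4)$ if and only if it unitarizes $T$ with $(a_0;a_1,a_2,a_3+a_4)$. By the description recalled in Section~4.2, the admissible characters for $T$ are $0<a_1,a_2,a_3+a_4<a_0$ with $2a_0=\sum a_i$; rewriting $a_3+a_4<a_0$ via the trace relation yields $a_1+a_2>a_3+a_4$, while $a_i<a_0$ becomes $a_i<a_j+a_k+a_l$, giving exactly the stated inequalities.

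Part (b) necessity is obtained from the trace and positivity. Taking the trace of $\sum a_i P_{H_i}=a_0 I$ in $\mathbb{C}^2$ gives $\sum a_i=2a_0$. The operator $a_0 I-a_i P_{H_i}=\sum_{j\neq i}a_j P_{H_j}$ is positive semidefinite with eigenvalues $a_0$ and $a_0-a_i$, so $a_i\leq a_0$. If $a_i=a_0$, then pairing with $v\in H_i$ gives $\sum_{j\neq i}a_j\|P_{H_j}v\|^2=0$, forcing $H_j\subseteq H_i^\perp$ for every $j\neq i$; in $\mathbb{C}^2$ this collapses the three remaining lines to $H_i^\perp$, contradicting the pairwise distinctness of the subspaces of $S_\mu$. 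Hence $a_i<a_0$ strictly, and combined with $\sum a_j=2a_0$ this gives $2a_i<\sum_j a_j$.

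For sufficiency in part (b) I adapt the proof of Theorem~\ref{Th:1.1} by introducing weights: in place of $T=\sum P_{H_k}$, take $T_b=\sum b_k P_{H_k}$ with $b_k>0$, and form the scalar product $(\cdot,\cdot)_2=(T_b^{-1}\cdot,\cdot)_1$. Repeating the computation gives $\sum_k b_k(v_k,v_k)_2/(v_k,v_k)_1\cdot P_{H_k}'=I$, so the orthoscalar character is $a_k(b,\mu)=b_k(v_k,v_k)_2/(v_k,v_k)_1$ with $a_0=1$, and the trace identity $\sum a_k=2$ is automatic. Scaling all $b_k$ by a common positive factor leaves each $a_k$ unchanged, so this is a smooth map from the open cone $\mathbb{R}_{>0}^4/\mathbb{R}_{>0}$ into the three-dimensional candidate region $\{a_k>0,\,a_k<1,\,\sum a_k=2\}$ for each fixed $\mu\in\mathbb{C}\setminus\{0,1\}$.

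The main obstacle is showing this parameter-to-character map is surjective. My plan is a continuity-and-properness argument: the image is nonempty (Theorem~\ref{Th:1.1} itself produces the point $b_k\equiv 1$), open (a Jacobian computation at a generic base point gives local invertibility), and relatively closed in the candidate region, since $b_k\to 0$ forces $a_k\to 0$ while $b_k\to\infty$ forces $a_k\to 1$, so any character in the interior of the candidate region has its preimage confined to a compact subset of parameter space modulo scaling and a convergent subsequence exists. Connectedness of the candidate region then implies the image equals it. As a cross-check, one may invoke the explicit formulas of~\cite{KNR} parametrising the irreducible orthoscalar quadruples of one-dimensional subspaces of $\mathbb{C}^2$, which yield the orthogonal projections in terms of the character and the $GL_2$-invariant $\mu$ directly.
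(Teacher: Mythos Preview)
Your treatment of part (a) and of the necessity direction in (b) is correct and essentially identical to the paper's: the paper also reduces $S_{3,4}$ to the two-dimensional triple and simply declares the necessary inequalities in (b) ``clear,'' whereas you spell out the trace--positivity argument.

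For sufficiency in (b) you take a route genuinely different from the paper, and the argument as written has a real gap. The paper \emph{fixes the character} $(a_0;a_1,a_2,a_3,a_4)$ satisfying the inequalities, quotes the explicit two-parameter family $(\lambda,x)$ of solutions of $\sum a_iP_i=I$ in dimension $(2;1,1,1,1)$ from~\cite{KNR}, computes $\mu$ as an explicit function of $(\lambda,x)$, and shows that for fixed $\lambda$ the image is an ellipse that contracts to a point as $\lambda\to A$ and expands to infinity as $\lambda\to\min(B,D)$, so every $\mu\in\mathbb{C}\setminus\{0,1\}$ is hit. Your approach is dual: you \emph{fix} $\mu$ and vary the weights $b=(b_1,\dots,b_4)$, producing a map $b\mapsto a(b,\mu)$ into the character simplex, and try to prove this map is surjective by a nonempty/open/closed argument.

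The gap is in the ``open'' step. Saying ``a Jacobian computation at a generic base point gives local invertibility'' shows at most that the image has nonempty interior; it does not show the image is open, which would require the differential to be surjective at \emph{every} point (or some substitute such as a nonzero degree computed at a regular value). Properness plus nonempty interior plus connectedness of the target do not combine to give surjectivity. You would need either to verify that the Jacobian of $b\mapsto a(b,\mu)$ never degenerates (a concrete calculation you have not done), or to run an honest degree argument. Your properness sketch is also only the one-variable-at-a-time case; the general boundary analysis (several $b_k$ escaping simultaneously at comparable rates) is not addressed, though it does in fact go through. The closing sentence---that one may invoke the explicit formulas of~\cite{KNR} ``as a cross-check''---is precisely the paper's proof, so it is not a cross-check but the missing argument.
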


\begin{proof}
  \textit{a)} Clearly, the degenerate representation of $S_{3,4}$
  can be unitarized with the character $\chi = (a_0;
  a_1,a_2,a_3,a_4)$ if and only if the representation of the triple
  of subspaces, $(<e_1, e_2>; <e_1>, <e_2>, <e_1+e_2>)$, can be
  unitarized with the character $(a_0; a_1,a_2,a_3+a_4)$, that is,
  if $a_1+a_2>a_3+a_4,~ a_1<a_2+a_3+a_4,~a_2<a_1+a_3+a_4,~2a_0 =
  a_1+a_2+a_3+a_4,~ a_i>0$.
  
  \textit{b)} It is clear that the conditions imposed on the
  character are necessary. Let us prove that they are sufficient.
  
  The proof is similar to considerations in~\cite{MOY}. Fix
  $a_1\leqslant a_2\leqslant a_3\leqslant a_4$, $a_1+a_2+a_3+a_4=2$,
  and use the formulas obtained in [KNR] for solving the
  equation $a_1P_1+a_2P_2+a_3P_3+a_4P_4=I$ in the dimension
  $(2;1,1,1,1)$, where $P_i$ are orthogonal projections,
  \begin{eqnarray*}
    P_1 &=& \frac{1}{2a_1\lambda}
    \begin{pmatrix}
      (\lambda-A)(\lambda+B)
      & \sqrt{-(\lambda^2-A^2)(\lambda^2-B^2)} \\ 
      \sqrt{-(\lambda^2-A^2)(\lambda^2-B^2)} & 
      -(\lambda+A)(\lambda-B)
    \end{pmatrix},\\
    P_2 &=& \frac{1}{2a_2\lambda}
    \begin{pmatrix} -(\lambda-D)(\lambda+C) &
      e^{ix}\sqrt{-(\lambda^2-D^2)(\lambda^2-C^2)}
      \\ 
      e^{-ix}\sqrt{-(\lambda^2-D^2)(\lambda^2-C^2)} 
      & (\lambda+D)(\lambda-C)
    \end{pmatrix},
    \\
    P_3 &=& \frac{1}{2a_3\lambda}
    \begin{pmatrix} -(\lambda-D)(\lambda-C) &
      -e^{ix}\sqrt{-(\lambda^2-D^2)(\lambda^2-C^2)}
      \\ 
      -e^{-ix}\sqrt{-(\lambda^2-D^2)(\lambda^2-C^2)} 
      & (\lambda+D)(\lambda+C)
    \end{pmatrix},
    \\
    P_4 &=& \frac{1}{2a_4\lambda}
    \begin{pmatrix} (\lambda+A)(\lambda+B) &
      -\sqrt{-(\lambda^2-A^2)(\lambda^2-B^2)}
      \\ 
      -\sqrt{-(\lambda^2-A^2)(\lambda^2-B^2)} 
      & -(\lambda-A)(\lambda-B)
    \end{pmatrix},
  \end{eqnarray*}
  $A\leqslant \lambda \leqslant \min(B,D), ~ 0\leqslant x \leq
  2\pi$, where $A=(a_4-a_1)/2, B=(a_4+a_1)/2, C=(a_3-a_2)/2,
  D=(a_3+a_2)/2$.

  If $A=0$, then $a_1=a_2=a_3=a_4=\frac12$. This case was considered
  in~\cite{MS2}, where, in particular, it was shown that any brick
  quadruple can be unitarized in the continuous case with the
  character $(2;1,1,1,1)$. So, we assume that $A>0$.
  
  Let us show that $(\Imp P_1, \Imp P_2, \Imp P_3, \Imp P_4)$ give
  all brick nondegenerate quadruples of the dimension $(2;1,1,1,1)$
  when $\lambda$ and $x$ are changing.
  
  Denote 
  $$
  K_1=\sqrt{\frac{(\lambda+A)(B-\lambda)}
    {(\lambda-A)(B+\lambda)}},\quad
  K_2=\sqrt{\frac{(\lambda-C)(D+\lambda)}
    {(\lambda+C)(D-\lambda)}},\quad
  K_3=\frac{\lambda+C}{\lambda-C}K_2,\quad
  K_4=\frac{\lambda-A}{\lambda+A}K_1.
  $$
  Then we have
  $$
  \begin{array}{ll}
    \Imp P_1 = <e_1+K_1e_2>,& 
    \Imp P_2 = <e_1+e^{-ix}K_2e_2>,\\[2mm]
    \Imp P_3 = <e_1-e^{-ix}K_3e_2>, & \Imp P_4 = <e_1-K_4e_2>.
  \end{array}
  $$
  If $P_i\neq P_j, i\neq j$, this system will be isomorphic to
  the system 
  $$
  (<e_1, e_2>; <e_1>, <e_2>, <e_1+\mu e_2>, <e_1+e_2>),
  $$
  where 
  $$
  \mu = \frac{1}{(K_1+K_4)(K_2+K_3)}
  (K_1K_2+K_3K_4+K_1K_4e^{ix}+K_2K_3e^{-ix}).
  $$
  Substituting, we get
  \begin{eqnarray*}
    \mu &=& \frac{1}{2} - \frac{AC}{2\lambda^2} +
    \frac{1}{4\lambda^2}K_1K_2^{-1}(\lambda-A)(\lambda-C)e^{ix} +
    \frac{1}{4\lambda^2}K_1^{-1}K_2(\lambda+A) 
    (\lambda+C)e^{-ix}\\[2mm]
    &=&\frac{1}{2} - \frac{AC}{2\lambda^2} +
    \frac{1}{4\lambda^2}\sqrt{(\lambda^2-A^2)(\lambda^2-C^2)}
    \begin{pmatrix}
      \sqrt{\frac{(B-\lambda)
          (D-\lambda)}{(B+\lambda)(D+\lambda)}}e^{-ix} +
      \sqrt{\frac{(B+\lambda)(D+\lambda)}
        {(B-\lambda)(D-\lambda)}}e^{ix}
    \end{pmatrix}.
  \end{eqnarray*}
  
  For a fixed $\lambda$, we have that $\mu$, considered as a
  function of $x$, is an ellipse in $\mathbb C$. If $\lambda$
  increases to $\min(B,D)$, then this ellipse extends to infinity.
  If $\lambda$ approaches $A$, then the ellipse contracts to the
  point $(\frac12-\frac{C}{2A})$. Thus $\mu$ can be made arbitrary
  distinct from $0$ and $1$ when varying $\lambda$ and $x$.
\end{proof}

\begin{remark}
  Let us remark that the conditions on the character in part b),
  Theorem~\ref{Th:2.2}, do not depend on the parameter $\mu$, that
  is, all $S_\mu$, $\mu \in \mathbb{C}\setminus \{0,1\}$, can be
  unitarized with the same characters.
\end{remark}

\end{document}